\documentclass[11pt]{article}
\usepackage{amssymb,amsmath,amsfonts,amsthm}


\bibliographystyle{plain}

\numberwithin{equation}{section}

\newtheorem{theorem}{Theorem}[section]
\newtheorem{lemma}{Lemma}[section]

\newtheorem{Proposition}[lemma]{Proposition}
\newtheorem{remark}[lemma]{Remark}
\newtheorem{example}[lemma]{Example}
\newtheorem{definition}[lemma]{Definition}

\def\1{\raisebox{2pt}{\rm{$\chi$}}}

\def\Carre#1#2{\vbox{
   \hrule height .#2pt
   \hbox{\vrule width .#2pt height #1pt \kern #1pt
      \vrule width .#2pt}
   \hrule height .#2pt}}

\def\grad{\nabla}

\def\inidat{u_0}

\def\Om{\Omega}
\def\e{\varepsilon}
\def\eps{\varepsilon}
\def\R{\mathbb{R}}

\def\Div{\textup{div}\,}


\renewcommand{\div}{\operatorname{div}}
\newcommand{\nau}{\nabla u}
\newcommand{\vnau}{\vert \nabla u \vert}

\newcommand{\scal}[2]{\left( #1\, ,\, #2 \right)}

\begin{document}

\title{Eventual regularity for the parabolic minimal surface equation}

\author{{G. Bellettini}\thanks{Dipartimento di Matematica, 
        Universit\`a di Roma Tor Vergata, 
        Via della Ricerca Scientifica 1, 00133 Roma, Italy,
        \newline e-mail: \texttt{belletti@mat.uniroma2.it}},
        {M. Novaga}\thanks{Dipartimento di Matematica, 
        Universit\`a di Pisa,
        Largo B. Pontecorvo 5, 56127 Pisa, Italy,
        \newline e-mail: \texttt{novaga@dm.unipi.it}},
        {G. Orlandi}\thanks{Dipartimento di Informatica, 
        Universit\`a di Verona,
        Strada le Grazie 15, 37134 Verona, Italy,
        \newline e-mail: \texttt{giandomenico.orlandi@univr.it}}
        }

\date{}
\maketitle

\begin{abstract}
\noindent We show that the parabolic minimal surface equation has an eventual
regularization effect, that is, the solution becomes smooth after a strictly positive finite
time.
\end{abstract}

\section{Introduction}\label{sec:int}

In this paper we consider the evolution problem
\begin{equation}\label{evol_curvatura0c}
\begin{cases}
\dfrac{\partial u}{\partial t} = {\rm div}
\left(\dfrac{\nabla u}{\sqrt{1+|\nabla u|^2}}\right)
&\qquad {\rm in} ~(0,\infty) \times \Om,
\\
\\
\dfrac{\nabla u}{\sqrt{1+|\nabla u|^2}}\cdot \nu^\Omega = 0 &\qquad
{\rm in} ~(0,\infty) \times \partial \Om,
\\
\\
u(0,\cdot) = u_0(\cdot) &\qquad {\rm in}~ \Om,
\end{cases}
\end{equation}
where $\Om\subset\R^N$ is a bounded open subset with smooth boundary,
 $\nu^\Om$ denotes the exterior unit normal to $\partial\Om$, and $\cdot$
is the scalar product in $\R^N$.

Problem \eqref{evol_curvatura0c} corresponds to the $L^2$-gradient flow
\cite{Brezis}
of the convex lower semicontinuous  functional
$F:L^2(\Om)\to [0,+\infty]$, defined as
\[
F(u) :=
\begin{cases}
\displaystyle
\int_\Om \sqrt{1+|\nabla u|^2}\,dx + |D^s u|(\Om) &\quad \text{if }u\in
BV(\Om),
\\
+\infty &\quad \text{otherwise},
\end{cases}
\]
where, for $u\in BV(\Om)$,  we write its distributional derivative $Du$ as
$$
Du=\nabla u\, dx + D^s u.
$$
Here, $D^s u$ is the singular part of the measure $Du$, with respect to the
Lebesgue measure (see \cite{Gi:84}), and $\vert D^s u\vert$ stands for its
total variation.

Equation \eqref{evol_curvatura0c} arises in several models of physical
systems
describing, for instance, the motion of capillary surfaces and
the motion of grain boundaries in annealing metals  (see \cite{Brakke}).
For such reasons, this evolution problem
has been already considered in the mathematical literature. In particular,
Lichnewski and Temam \cite{LT} showed existence of generalized solutions,
while Gerhardt \cite{gerhardt}  and Ecker \cite{ecker}
proved esimates on $u,\,u_t,\,|Du|$ similar to the ones we present
in this paper (see Lemma \ref{lem:crucial}).

We point out that equation
\eqref{evol_curvatura0c} should not be confused
with the mean curvature flow for graphs, which has been
deeply studied in \cite{EH1,EH2},  and reads as
\[
\frac{\partial u}{\partial t} = \sqrt{1+|\nabla u|^2}
\,{\rm div} \left(\dfrac{\nabla u}{\sqrt{1+|\nabla u|^2}}\right) .
\]

Let us state our main result.

\begin{theorem}\label{teo:main}
Let  $\inidat \in L^2(\Om)$ and let $u$ be the unique
solution of \eqref{evol_curvatura0c}.
Suppose  that one of the two following cases  hold:
\begin{itemize}
\item[1)] $~N=1$,
\item[2)] $~N>1$,  and the generalized graph of $\inidat$\footnote{Roughly, 
the  graph of $\inidat$ with the addition of  the ``vertical'' parts.},
$$
{\rm graph}(\inidat) := \overline{\partial \big\{(x,y): x\in \Om,\,y< u_0(x)\big\}\cap (\Om\times \R)},
$$
is a compact hypersurface of class $\mathcal C^{1,1}$, 
meeting orthogonally $(\partial\Om)\times\R$.
\end{itemize}
Then there exists $T > 0$ such that 
$$
u \in \mathcal
C^\omega((T,+\infty)\times\Om).
$$
Namely, $u(t)$ is analytic in $\Omega$, for any $t \in (T, +\infty)$. Moreover, 
$u(t)$ converges to the mean value $\overline u_0 := \frac{1}{\vert \Omega\vert} \int_\Omega \inidat~dx$ 
of $\inidat$ in 
$\mathcal C^\infty(\Omega)$ as $t \to +\infty$.
\end{theorem}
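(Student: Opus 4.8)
The plan is to use the gradient-flow structure of \eqref{evol_curvatura0c} to pin down the long-time behaviour, then a geometric analysis of the evolving generalized graph to show that, after a finite time, the solution has bounded slope, whence interior analyticity follows from classical parabolic regularity.

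\emph{Lyapunov structure and convergence.} Integrating \eqref{evol_curvatura0c} over $\Om$ and using the boundary condition, $\frac{d}{dt}\int_\Om u(t)\,dx=\int_{\partial\Om}\frac{\nabla u}{\sqrt{1+|\nabla u|^2}}\cdot\nu^\Om\,d\mathcal H^{N-1}=0$, so the mean $\overline u_0$ is conserved. Since $F$ is convex, $t\mapsto F(u(t))$ is non-increasing with $\frac{d}{dt}F(u(t))=-\|u_t(t)\|_{L^2(\Om)}^2$, and $F(u)\ge|\Om|$ with equality exactly for constants, the unique minimizer of $F$ among functions with mean $\overline u_0$ is the constant $\overline u_0$. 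By the general theory of gradient flows of convex lower semicontinuous functionals \cite{Brezis}, $u(t)\to\overline u_0$ in $L^2(\Om)$ and $F(u(t))\to|\Om|$ as $t\to+\infty$; in particular the area $\mathcal H^N(\graph(u(t)))$ tends to $|\Om|$. Moreover the same theory gives the instantaneous regularization $u(t)\in D(F)\subset BV(\Om)$ for every $t>0$, and the a priori bounds of Lemma \ref{lem:crucial} keep $\graph(u(t))$ inside a fixed slab $\Om\times[-C,C]$ for $t\ge t_0>0$.

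\emph{Geometric smoothing and bounded slope after a finite time.} The crux is that, for $t>0$, $\graph(u(t))$ is a hypersurface evolving by the geometric flow with normal velocity $H\,\langle\nu,e_{N+1}\rangle$ and meeting $(\partial\Om)\times\R$ orthogonally, which is the geometric counterpart of \eqref{evol_curvatura0c} (distinct from mean curvature flow, for which the velocity would be $H$). In case 2), the $\mathcal C^{1,1}$ hypothesis on $\graph(\inidat)$ makes this flow well posed with an instantaneous smoothing effect; in case 1), where $N=1$, the generalized graph is a curve, for which one first uses the $BV$-regularization above and then the one-dimensional structure together with explicit barriers to make the ``vertical parts'' shrink. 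In either case the slab bound of Lemma \ref{lem:crucial} prevents the formation of singularities, so $\graph(u(t))$ stays smooth on $[t_0,+\infty)$ with uniform bounds on all its derivatives there, by parabolic estimates for the geometric flow. Interpolating these uniform bounds against the convergence $\graph(u(t))\to\Om\times\{\overline u_0\}$ from the previous step gives convergence in $\mathcal C^\infty$; hence there is $T>0$ such that for every $t>T$ the unit normal to $\graph(u(t))$ is everywhere close to $e_{N+1}$. Thus, for $t>T$, $\graph(u(t))$ is a genuine graph, $u(t)$ is smooth, and $\sup_{t>T}\|\nabla u(t)\|_{L^\infty(\Om)}=:M<+\infty$.

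\emph{Interior analyticity and smooth convergence to $\overline u_0$.} On $(T,+\infty)\times\Om$ the function $u$ solves $u_t=\div\big(a(\nabla u)\big)$ with the real-analytic nonlinearity $a(p)=p/\sqrt{1+|p|^2}$, and by the previous step this equation is uniformly parabolic on $(T,+\infty)\times\Om'$ for every $\Om'\Subset\Om$. Differentiating the equation and using De Giorgi--Nash--Moser together with parabolic Schauder estimates bootstraps $u$ to $\mathcal C^\infty$, and the classical analyticity theory for nonlinear uniformly parabolic equations (Cauchy estimates, or the method of majorants) then yields $u\in\mathcal C^\omega((T,+\infty)\times\Om)$; in particular $u(t)$ is analytic in $\Om$ for each $t>T$. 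Finally, for $t>T$ the parabolic Schauder estimates give uniform $\mathcal C^k(\Om')$ bounds on $u(t)$, and since $u(t)-\overline u_0\to0$ in $L^2(\Om)$, interpolation gives $u(t)\to\overline u_0$ in $\mathcal C^\infty(\Om)$ as $t\to+\infty$ (alternatively, linearizing \eqref{evol_curvatura0c} at the constant $\overline u_0$ produces the heat equation with Neumann boundary conditions, whose spectral gap forces exponential convergence in every $\mathcal C^k$ norm).

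\emph{Main obstacle.} The hard part is the geometric step: making rigorous that the abstract $L^2$-gradient-flow solution evolves the generalized graph by the stated geometric flow, controlling the points where the tangent plane is vertical (where $u$ is only $BV$ and the flow degenerates), and extracting the parabolic smoothing from the merely $\mathcal C^{1,1}$ (case 2), respectively $L^2$ (case 1), initial datum --- this is precisely where the two hypotheses are used. Once the slope has become bounded, the rest is routine interior parabolic regularity.
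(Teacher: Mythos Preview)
Your outline has the right global architecture (energy decay $\Rightarrow$ eventual bounded slope $\Rightarrow$ parabolic regularity), but the middle step---what you call ``geometric smoothing''---is asserted, not proved, and you yourself flag it as the ``main obstacle'' without resolving it. Concretely: the claim that the generalized graph evolves smoothly for all time by $V=H\,\langle\nu,e_{N+1}\rangle$, with uniform $\mathcal C^\infty$ bounds, is essentially the content of the theorem. This flow is genuinely degenerate where $\langle\nu,e_{N+1}\rangle=0$ (precisely on the vertical parts you need to eliminate), so standard parabolic estimates for geometric flows do not apply there, and a slab bound by itself does not rule out singularities---that argument is specific to mean curvature flow of entire graphs (Ecker--Huisken), not to this different, degenerate flow. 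For $N=1$ you invoke ``explicit barriers'' but give none, and Lemma~\ref{lem:crucial} requires smooth initial data, so it yields neither a slab bound nor a gradient bound directly from $u_0\in L^2(\Om)$.

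The paper avoids the degenerate geometric flow entirely and works instead with the bounded vector field $z=\nabla u/\sqrt{1+|\nabla u|^2}$. The key mechanism is: once $z(t,\cdot)$ is H\"older in $x$ uniformly in $t$, then $|z(t,x_0)|\ge\eta$ at one point forces $|z|\ge\eta/2$ on a ball of fixed radius $\eps(\eta)$, hence $u(t,\cdot)$ is monotone in some direction on that ball, which gives a positive lower bound on $\|u(t)-\overline u_0\|_{L^2}$---contradicting the $L^2$ convergence. So the real work is the uniform H\"older bound on $z$. For $N=1$ this is cheap: the equation reads $z_x=u_t$, and the Brezis estimate $\|u_t(t)\|_{L^2}\le\|u_0\|_{L^2}/t$ puts $z(t,\cdot)\in H^1(\Om)\hookrightarrow C^{1/2}$ uniformly for $t\ge\tau$. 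For $N>1$ the $\mathcal C^{1,1}$ hypothesis is used only to approximate $u_0$ by smooth Lipschitz data $u_0^n$ with uniformly bounded $\div\big(\nabla u_0^n/\sqrt{1+|\nabla u_0^n|^2}\big)$ (obtained by running genuine mean curvature flow from $\graph(u_0)$ for a short time); Lemma~\ref{lem:crucial} and stability then give $\sup_t\|u_t(t)\|_{L^\infty}<\infty$, i.e.\ $\graph(u(t))$ has uniformly bounded generalized mean curvature, and Allard/Tamanini regularity for quasi\-minimizers of the perimeter yields the uniform $\mathcal C^{1,\alpha}$ bound on $\graph(u(t))$, hence H\"older continuity of $z$.
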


The assumption on ${\rm graph}(\inidat)$ when $N>1$ is technical,
but we are presently not able to remove it. Notice that
from this condition it follows that 
$$
u_0\in  L^\infty(\Om) \cap BV(\Om),
$$
and $u_0$ is of class $\mathcal C^{1,1}$ in a neighbourhood of $\partial\Om$,
with Neumann boundary condition on $\partial\Om$. 

Theorem \ref{teo:main} states that the solution $u(t)$ to
\eqref{evol_curvatura0c} becomes  smooth after some time $T$, which in
general is strictly positive.
This behaviour is somewhat different from the usual regularity results for 
parabolic partial differential equations.
Indeed, for this problem there is no instantaneous regularization of the
solution,  which holds for uniformly parabolic equations
but does {\it not} hold in general for \eqref{evol_curvatura0c}.

Another well-known degenerate parabolic problem  which shares
this property is the so-called
total variation flow, which has relevant applications in image analysis
and denoising (see, {\it e.g.}, 
 \cite{ROF,BeCaNo,ACMbook,TVHandbook,notes})\footnote{As shown in \cite{ACDM}, solutions to the total variation flow
in the whole of $\mathbb R^N$ 
become extinct (hence in particular smooth) in finite time,
even if the discontinuity set
does not disappear immediately
(see, {\it e.g.},  \cite{BeCaNo:05,CCN}).}.

\begin{example}\label{rem:esempio}\rm
As an example of eventual but not instantaneous regularization of $u$,
we consider the following situation:
$N=1$, $\Omega = (0,2)$, $c$ a positive constant, and
$$
u_0(x) := \begin{cases}
\sqrt{1-x^2} + c & {\rm if}~ x \in (0,1),
\\
- \sqrt{1 - (2-x)^2} & {\rm if}~ x \in (1,2).
\end{cases}
$$
Then $u_0$ is discontinuous at $x=1$, and
${\rm graph}(u_0)$ is a curve of class $\mathcal C^{1,1}$
consisting of two quarters of unit circles (hence with constant curvature
equal to $1$) and a vertical segment of length $c$, with the correct
boundary condition. Then
$$
u(t,x) =  \begin{cases}
-t + \sqrt{1-x^2} + c & {\rm if}~ (t,x) \in \left(0,\frac{c}{2}\right)\times (0,1),
\\
t - \sqrt{1 - (2-x)^2} & {\rm if}~ (t,x) \in \left(0,\frac{c}{2}\right)\times (1,2),
\end{cases}
$$
which is still discontinuous at $x=1$, because the
upper quarter of circle
as unit negative vertical velocity, while the
lower  quarter of circle
as unit positive  vertical velocity. Hence the time necessary
to let the jump disappear is $T:= c/2$,
and one checks that the solution becomes smooth in $(T, +\infty)\times
(0,2)$.
\end{example}

\smallskip

\noindent{\bf Acknowledgements.} This problem has been proposed to us by
our friend and collegue Vicent Caselles, who prematurely died in August 2013.
Without his contribution and insight this project would not have been possible. 
We are deeply indebted with him, and we dedicate this work to his memory.

\section{Notation and preliminary results}\label{sec:not}
We denote by $\partial F$ the subdifferential of $F$ in the sense of convex analysis, which defines a maximal
monotone operator in $L^2(\Om)$. A characterization of $\partial F$ is given in Remark \ref{remsub}.

\begin{definition}\label{def:strongsolution}
Let $u_0 \in L^2(\Omega)$.
We say that a function $u:[0,+\infty)\to \Om$ is 
a strong solution (briefly, a solution) of \eqref{evol_curvatura0c}, if
\begin{itemize}
\item[(i)]
$u \in H^1((0,T); L^2(\Om))\cap L^\infty((\tau,+\infty); BV(\Om))$ for any $T>0$ and $\tau >0$, 
\item[(iii)] the following inclusion holds:
\begin{equation}\label{eqweak}
u_t (t)+ \partial F(u(t))\ni 0 \qquad \text{for a.e. }t \in (0,+\infty),
\end{equation}
\item[(ii)] $\displaystyle \lim_{t\to 0^+} u(t) =u_0$ in $L^2(\Om)$.
\end{itemize}
\end{definition}

We let 
\[
X(\Om):= \big\{ z\in L^2(\Om;\R^N): \Div z\in L^2(\Om)\big\}.
\]
It is known that, for $z\in X(\Om)$,  the normal trace $[z,\nu^\Om]$ of $z$ on $\partial \Om$ 
is well defined (see \cite{Anz:83,ACMbook}).

\begin{remark}\label{remsub}\rm
Following \cite{ACMbook}, inclusion \eqref{eqweak} can be 
equivalently written as 
\begin{equation}\label{eqstrong}
\begin{cases}
u_t = \Div z &\qquad \text{in } \mathcal D'(\Omega), ~\text{for a.e. } t \in  (0,+\infty),
\\
z = \dfrac{\nabla u}{\sqrt{1+|\nabla u|^2}} 
&\qquad \text{a.e. in } (0,+\infty)\times\Om,
\\
z(t) \in X(\Om) &\qquad \text{for a.e.  } t\in  (0,+\infty),
\\
[z(t,\cdot),\nu^\Om] = 0 &\qquad \text{for a.e.  } t\in  (0,+\infty).
\end{cases}
\end{equation}
\end{remark}

Note that, in the expression of $z$,
only  the absolutely continuous 
part of the spatial gradient of $u$ is involved.

Let us recall the following results, proved in 
\cite[Theorems 3.2, 3.7, 3.11]{Brezis} (see also  \cite[Chapter 6]{ACMbook}).

\begin{theorem}\label{exunest}
Let $u_0 \in L^2(\Om)$. 
Then there exists a unique 
solution $u$ of \eqref{evol_curvatura0c}. Moreover:
\begin{itemize}
\item[(i)] the function $t \in (0,+\infty) \mapsto F(u(t))$ is nonincreasing, and
\[
\frac{d}{dt}F(u(t))=-\int_\Om u_t^2\,dx \qquad \text{for a.e. }t\in (0,+\infty);
\]
\item[(ii)] $\displaystyle \lim_{t\to +\infty} u(t) 
=\overline  u_0 := \frac{1}{|\Om|}\int_\Om u_0~dx$ in $L^2(\Om)$;
\item[(iii)] $\Vert u_t\Vert_{L^2(\Om)} 
\leq \dfrac{\Vert \inidat\Vert_{L^2(\Om)}}{t}$ for almost any $t >0$.
\end{itemize}
\end{theorem}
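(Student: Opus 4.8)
The plan is to read the inclusion \eqref{eqweak}, in the equivalent PDE form \eqref{eqstrong} of Remark \ref{remsub}, as the gradient flow $u_t(t)+\partial F(u(t))\ni 0$ generated in the Hilbert space $H=L^2(\Om)$ by the subdifferential of $F$, and to extract each assertion from the Komura--Brezis theory of maximal monotone operators \cite{Brezis,ACMbook}. The starting point is that $F$ is proper, convex and lower semicontinuous on $L^2(\Om)$: convexity follows from that of $\xi\mapsto\sqrt{1+|\xi|^2}$ and of the total variation, properness from $F(c)=|\Om|<+\infty$ for every constant $c$, and lower semicontinuity from the $L^1$-lower semicontinuity of the relaxed area functional on $BV(\Om)$. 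Consequently $\partial F$ is maximal monotone, and since its domain $D(F)=BV(\Om)\cap L^2(\Om)$ is dense in $L^2(\Om)$ we have $\ov{D(\partial F)}=L^2(\Om)$. The generation theorem then provides, for every $\inidat\in L^2(\Om)$, a unique strong solution $u$ in the sense of Definition \ref{def:strongsolution}, with $u(0)=\inidat$; uniqueness is immediate from monotonicity, since two solutions $u,v$ satisfy $\tfrac{d}{dt}\Vert u-v\Vert_{L^2(\Om)}^2\ls 0$. The equivalence recorded in Remark \ref{remsub} is what identifies the abstract inclusion with the system \eqref{eqstrong}, boundary condition included.

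For the regularizing estimate (iii) I would use the refinements available because $\partial F$ is a subdifferential: for every $t>0$ one has $u(t)\in D(\partial F)$, the right derivative of $u$ exists and equals $-\partial^0F(u(t))$, the minimal-norm section of $\partial F(u(t))$, and
\[
\Vert u_t(t)\Vert_{L^2(\Om)}=\Vert\partial^0F(u(t))\Vert_{L^2(\Om)}\ls\Vert\partial^0F(v)\Vert_{L^2(\Om)}+\frac{\Vert\inidat-v\Vert_{L^2(\Om)}}{t}\qquad\text{for every }v\in D(\partial F).
\]
Every constant minimizes $F$, so $0\in\partial F(0)$ and $\partial^0F(0)=0$; choosing $v\equiv 0$ gives precisely $\Vert u_t(t)\Vert_{L^2(\Om)}\ls\Vert\inidat\Vert_{L^2(\Om)}/t$, which is (iii). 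The energy identity (i) is the chain rule for convex functionals along the flow: $t\mapsto F(u(t))$ is locally absolutely continuous on $(0,+\infty)$ and, since $-u_t(t)=\partial^0F(u(t))$,
\[
\frac{d}{dt}F(u(t))=\scal{\partial^0F(u(t))}{u_t(t)}=-\Vert u_t(t)\Vert_{L^2(\Om)}^2\ls 0,
\]
which yields both the monotonicity and the stated identity. The membership $u\in L^\infty((\tau,+\infty);BV(\Om))$ follows because $F(u(t))\ls F(u(\tau))<+\infty$ for $t\gs\tau$ bounds $|Du(t)|(\Om)\ls F(u(t))$, while $\Vert u(t)\Vert_{L^2(\Om)}$ stays bounded by contractivity.

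For the long-time behaviour (ii) I would first observe that the flow conserves the spatial mean: integrating $u_t=\Div z$ over $\Om$ and using $[z(t),\nu^\Om]=0$ from \eqref{eqstrong} gives $\tfrac{d}{dt}\int_\Om u(t)\,dx=\int_{\partial\Om}[z(t),\nu^\Om]\,d\H^{N-1}=0$, so $\int_\Om u(t)\,dx=\int_\Om\inidat\,dx$ for all $t$. Pairing $u_t(t)=-\partial^0F(u(t))$ with $u(t)-\ov u_0$ and using convexity (recall $\ov u_0$ minimizes $F$) yields $\tfrac12\tfrac{d}{dt}\Vert u(t)-\ov u_0\Vert_{L^2(\Om)}^2\ls-(F(u(t))-|\Om|)\ls 0$; hence $\Vert u(t)-\ov u_0\Vert_{L^2(\Om)}$ is nonincreasing and $\int_0^{+\infty}(F(u(t))-|\Om|)\,dt<+\infty$, so, the integrand being nonincreasing, $F(u(t))\to|\Om|=\min F$. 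This forces $\int_\Om(\sqrt{1+|\nabla u(t)|^2}-1)\,dx$ and $|D^su(t)|(\Om)$ to vanish, hence $|Du(t)|(\Om)\to 0$, and the $BV$--Poincar\'e inequality together with mean conservation gives $u(t)\to\ov u_0$ in $L^{N/(N-1)}(\Om)$. The convergence in $L^2(\Om)$ asserted in (ii) then follows from the asymptotic theorem for gradient flows of convex functionals \cite{Brezis}, whose limit must be a minimizer of $F$, necessarily $\ov u_0$ by conservation of the mean.

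Existence, uniqueness, (i) and (iii) are essentially automatic once the maximal monotone structure and the characterization of $\partial F$ in Remark \ref{remsub} are granted, so I expect the genuine obstacle to be the \emph{strong} $L^2$ convergence in (ii). Indeed the energy decay and mean conservation deliver, through the $BV$--Poincar\'e inequality, only convergence in $L^{N/(N-1)}(\Om)$, which is strictly weaker than $L^2$ for $N\gs 3$ (a thin-spike example shows that $|Du(t)|(\Om)\to0$ with fixed mean does not by itself prevent $L^2$-concentration). The upgrade to strong $L^2$ convergence is immediate from compactness of the sublevel sets of $F$ when $N=1$, but for higher dimensions it rests on the finer asymptotic theory of gradient flows of convex functionals, which is where the real work lies.
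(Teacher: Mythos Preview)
Your proposal is correct and mirrors the paper's own treatment: the paper does not give an independent proof of this theorem but simply records it as a consequence of the Komura--Brezis theory, citing \cite[Theorems 3.2, 3.7, 3.11]{Brezis} and \cite[Chapter 6]{ACMbook}, with Remark \ref{rem:regarding} supplying exactly your mean-conservation argument to identify the limiting constant in (ii). Your write-up is in fact more detailed than the paper's, and you correctly isolate the one genuinely delicate point---upgrading to \emph{strong} $L^2$ convergence in (ii)---which the paper handles by direct appeal to \cite[Theorem 3.11]{Brezis} rather than via the intermediate $L^{N/(N-1)}$ route you sketch.
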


\begin{remark}\label{rem:regarding}\rm
Regarding point $(ii)$ of Theorem \ref{exunest}, from \cite[Theorem 3.11]{Brezis} it follows that $u(t)$ converges   in $L^2(\Om)$, as $t\to +\infty$, to 
a minimizer of $F$, that is, to a  constant in $\Om$. The value of this constant is fixed  from 
\[
\frac{d}{dt}\int_\Om u(t,x)\,dx = \int_\Om \Div z(t,x)\, dx = 0,
\]
where the last equality follows from the Gauss-Green Theorem \cite{Anz:83}. 
\end{remark}

The maximum and minimum principles ensure the following result.

\begin{Proposition}\label{pro:calanormalinfinito}
Let  $\inidat \in  L^\infty(\Om)$. Then the function
$$
t \in (0, +\infty) \to \Vert u(t) \Vert_{L^\infty(\Omega)}
$$ 
is nonincreasing.
\end{Proposition}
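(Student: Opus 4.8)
The plan is to invoke the comparison principle for the gradient flow \eqref{evol_curvatura0c} against constant initial data. Recall that if $u_0 \le v_0$ in $L^2(\Om)$, then the corresponding solutions satisfy $u(t) \le v(t)$ for all $t>0$; this is a standard property of the semigroup generated by the subdifferential $\partial F$ of a convex functional, and in fact follows from the order-preserving character of the resolvent $(I+\lambda\partial F)^{-1}$ together with the Crandall–Liggett formula (alternatively, one may cite the comparison results in \cite[Chapter 6]{ACMbook}). The key observation is that for a constant function $c\in\R$ one has $\nabla c = 0$, hence $z\equiv 0$ is admissible in \eqref{eqstrong}, $\Div z = 0$, and the constant is a stationary solution: the solution starting from $c$ is $u(t)\equiv c$ for all $t\ge 0$.

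Now let $\inidat\in L^\infty(\Om)$ and set $M := \Vert \inidat\Vert_{L^\infty(\Om)}$, so that $-M \le \inidat \le M$ a.e.\ in $\Om$. Applying the comparison principle twice, with the stationary solutions $-M$ and $M$ as barriers, yields
\[
-M \le u(t,\cdot) \le M \qquad \text{a.e. in }\Om,\text{ for every }t>0,
\]
i.e.\ $\Vert u(t)\Vert_{L^\infty(\Om)} \le M = \Vert \inidat\Vert_{L^\infty(\Om)}$. To obtain monotonicity of $t\mapsto \Vert u(t)\Vert_{L^\infty(\Om)}$, fix $0 < s < t$. By Theorem \ref{exunest} the semigroup property holds, so $u(t)$ is the solution at time $t-s$ of \eqref{evol_curvatura0c} with initial datum $u(s)\in L^\infty(\Om)$ (note $u(s)\in BV(\Om)\subset L^2(\Om)$ and, by the bound just proved, $u(s)\in L^\infty(\Om)$). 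Repeating the barrier argument with $M_s := \Vert u(s)\Vert_{L^\infty(\Om)}$ in place of $M$ gives $\Vert u(t)\Vert_{L^\infty(\Om)} \le \Vert u(s)\Vert_{L^\infty(\Om)}$, which is exactly the asserted monotonicity.

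The only point requiring care—and the part I expect to be the main technical obstacle—is justifying the comparison principle in the present $L^2$/$BV$ framework, in particular ensuring that constants are genuine strong solutions in the sense of Definition \ref{def:strongsolution} and that the ordering passes to the limit in the Crandall–Liggett approximation despite the nonsmoothness of $F$ (the singular term $|D^su|(\Om)$ and the $1$-homogeneous behaviour of the integrand at infinity). This is handled by the general theory of accretive operators with $T$-accretivity, or more directly by the results on the total variation and mean-curvature-type flows in \cite{ACMbook}; once comparison is available, the proposition follows immediately from the two displayed inequalities above.
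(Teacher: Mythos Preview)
Your proposal is correct and is precisely the argument the paper has in mind: the paper does not give a detailed proof but simply states that the result follows from the maximum and minimum principles, which is exactly the comparison with the constant barriers $\pm\Vert u_0\Vert_{L^\infty(\Omega)}$ that you carry out. Your write-up merely supplies the details the paper omits.
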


The following approximation result is proved in \cite[Theorem 3.16]{Brezis}.

\begin{Proposition}\label{pro:approx}
Let $\inidat,\,{\inidat}_n \in L^2(\Om)$ be 
such that 
$$
\lim_{n\to +\infty} \|\inidat -{\inidat}_n\|_{L^2(\Om)}=0\,.
$$
Let $u$ be the solution to 
\eqref{evol_curvatura0c},
and let $u_n$ be the solution to the first two equations of 
\eqref{evol_curvatura0c}, and with $u_n(0,\cdot) = u_{0n}(\cdot)$. 
Then, for all $T>0$ we have
\begin{equation}\label{eqapprox}
\lim_{n\to +\infty} \|u(t) -u_n(t)\|_{L^2(\Om)}=0 
\qquad \text{uniformly in }[0,T]\,.
\end{equation}
\end{Proposition}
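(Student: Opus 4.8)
The plan is to deduce \eqref{eqapprox} from the contraction (nonexpansivity) property of the $L^2$-semigroup generated by the maximal monotone operator $\partial F$. Since both $u$ and $u_n$ are strong solutions of the \emph{same} abstract evolution inclusion $v_t+\partial F(v)\ni 0$ in $L^2(\Om)$, differing only in their initial data $\inidat$ and $\inidat_n$, I expect the solution map $\inidat\mapsto u(t)$ to be a contraction in $L^2(\Om)$ for every fixed $t\ge 0$; granting this, \eqref{eqapprox} is immediate. The substance of the proof is therefore to establish this contraction estimate directly from the monotonicity of $\partial F$.

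To this end I would set $w:=u-u_n$ and study the scalar function $\phi(t):=\tfrac12\|w(t)\|_{L^2(\Om)}^2$. By Definition \ref{def:strongsolution}(i) both $u$ and $u_n$ lie in $H^1$ in time on compact subintervals of $(0,+\infty)$, so $w$ does as well, whence $\phi$ is locally absolutely continuous on $(0,+\infty)$ with $\phi'(t)=\scal{w(t)}{w_t(t)}$ for a.e. $t$. Using \eqref{eqweak}, there are selections $\xi(t)\in\partial F(u(t))$ and $\xi_n(t)\in\partial F(u_n(t))$ with $u_t=-\xi$ and $(u_n)_t=-\xi_n$, so that
\[
\phi'(t)=\scal{u(t)-u_n(t)}{u_t(t)-(u_n)_t(t)}=-\scal{u(t)-u_n(t)}{\xi(t)-\xi_n(t)}\le 0,
\]
the last inequality being exactly the monotonicity of $\partial F$. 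Hence $\phi$ is nonincreasing on $(0,+\infty)$.

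It remains to pass to the closed interval and conclude. Since $u,u_n$ are continuous on $(0,+\infty)$ with values in $L^2(\Om)$ (being $H^1$ in time there) and satisfy $u(t)\to\inidat$, $u_n(t)\to\inidat_n$ in $L^2(\Om)$ as $t\to0^+$ by Definition \ref{def:strongsolution}(ii), the function $\phi$ extends continuously to $t=0$ with $\phi(0)=\tfrac12\|\inidat-\inidat_n\|_{L^2(\Om)}^2$, and monotonicity gives
\[
\|u(t)-u_n(t)\|_{L^2(\Om)}\le \|\inidat-\inidat_n\|_{L^2(\Om)}\qquad\text{for every }t\ge0.
\]
Taking the supremum over $t\in[0,T]$ and letting $n\to+\infty$ then yields \eqref{eqapprox}; in fact the bound is uniform on all of $[0,+\infty)$, not merely on $[0,T]$.

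The only point requiring genuine care is the behaviour of $\phi$ near $t=0$: by Theorem \ref{exunest}(iii) the estimate $\|u_t\|_{L^2(\Om)}\le\|\inidat\|_{L^2(\Om)}/t$ permits $u_t$ to fail to be square-integrable as $t\to0^+$, so $\phi$ need not be absolutely continuous up to the origin, and the chain-rule computation above is justified only on each $[\tau,T]$ with $\tau>0$. This is harmless, because the monotonicity bound holds on every such $[\tau,T]$, and the $L^2(\Om)$-continuity of $u$ and $u_n$ at $t=0$ lets me pass to the limit $\tau\to0^+$. I therefore expect no serious obstacle: the argument is essentially the standard contractivity of gradient-flow semigroups, and the degenerate regularization at $t=0$ enters only as a minor technicality.
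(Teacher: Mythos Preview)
Your argument is correct: it is precisely the standard contractivity estimate for semigroups generated by maximal monotone operators, and your handling of the behaviour near $t=0$ is fine. The paper itself does not supply a proof of this proposition at all---it simply cites \cite[Theorem 3.16]{Brezis}---so you have effectively reproduced the underlying argument from that reference rather than diverged from anything the authors wrote.
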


\section{Proof of the main result}\label{sec:proof}
We start with the following estimates, 
which have been shown in \cite{gerhardt,ecker}. 
It can be useful to have a detailed proof, which we include here for completeness.

\begin{lemma}\label{lem:crucial}
Let $u_0 \in \mathcal C^\infty(\overline\Omega)$, 
and let $u$ be the solution to 
\eqref{evol_curvatura0c} given by Theorem \ref{exunest}. 
For all $t>0$ we have 
\begin{align}\label{eqnau}
\|\nau(t)\|_{L^\infty(\Omega)} &\leq\ \|\nabla u_0\|_{L^\infty(\Omega)}
\\\label{equt}
\| u_t(t)\|_{L^\infty(\Omega)} &\leq\ \|u_t(0)\|_{L^\infty(\Omega)}\,.
\end{align}
In particular, by parabolic regularity theory, from \eqref{eqnau} 
it follows that $u\in C^\infty([0,+\infty)\times\overline \Om)$.
\end{lemma}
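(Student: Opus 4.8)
The plan is to establish the two a priori $L^\infty$ bounds \eqref{eqnau} and \eqref{equt} via the maximum principle applied to suitable parabolic equations satisfied by $v:=|\nabla u|^2$ (or by $\sqrt{1+|\nabla u|^2}$) and by $w:=u_t$, respectively, and then to invoke standard parabolic regularity theory to conclude $u\in C^\infty([0,+\infty)\times\overline\Omega)$. Since $u_0\in C^\infty(\overline\Omega)$ with the compatibility (Neumann) condition, short-time classical solvability of \eqref{evol_curvatura0c} is standard, so $u$ is smooth on $[0,T_0)\times\overline\Omega$ for some $T_0>0$ and all the differentiations below are justified on that interval; the a priori bounds will then let us continue the solution for all time.

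First I would differentiate the PDE $u_t=\operatorname{div}\!\big(\nabla u/\sqrt{1+|\nabla u|^2}\big)$ in a spatial direction $x_k$. Writing the right-hand side as $a^{ij}(\nabla u)\,\partial_{ij}u$ with $a^{ij}(p)=\frac{1}{\sqrt{1+|p|^2}}\big(\delta_{ij}-\frac{p_ip_j}{1+|p|^2}\big)$ (a positive-definite, though not uniformly elliptic, matrix), one gets a linear parabolic equation for $u_k:=\partial_k u$ with coefficients depending on $\nabla u$ and $\nabla^2 u$. Then I would compute the equation satisfied by $v:=\sum_k u_k^2 = |\nabla u|^2$: it has the form $v_t = a^{ij}\partial_{ij}v - 2a^{ij}\sum_k\partial_i u_k\,\partial_j u_k + (\text{lower order from }\partial_p a^{ij})$, and the key point is that the genuinely quadratic-in-second-derivatives terms have a favorable sign, so $v$ is a subsolution of a linear parabolic operator with no zeroth-order term. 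The Neumann condition, together with the smoothness of $\partial\Omega$, must be used to control the boundary term in the maximum principle (this is where a Hopf-type argument, or the convexity/curvature of $\partial\Omega$, enters — for a general smooth domain one typically needs the boundary to be convex, or one adds a correction term involving $\operatorname{dist}(\cdot,\partial\Omega)$). The maximum principle then gives $\sup_\Omega v(t)\le \sup_\Omega v(0)$, i.e.\ \eqref{eqnau}. For \eqref{equt}, I would differentiate the PDE in $t$: $w:=u_t$ satisfies $w_t = a^{ij}(\nabla u)\partial_{ij}w + (\partial_{p_\ell}a^{ij})(\nabla u)\,\partial_\ell w\,\partial_{ij}u$, again a linear parabolic equation for $w$ with no zeroth-order term, and differentiating the Neumann boundary condition in $t$ shows $w$ satisfies a homogeneous Neumann-type condition as well; the maximum principle then yields $\|u_t(t)\|_{L^\infty}\le\|u_t(0)\|_{L^\infty}$.

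With \eqref{eqnau} in hand, $\|\nabla u(t)\|_{L^\infty(\Omega)}$ is bounded uniformly in $t$, so the matrix $a^{ij}(\nabla u)$ is uniformly elliptic and bounded on $[0,\infty)\times\overline\Omega$; the equation for $u$ becomes a genuinely (uniformly) parabolic quasilinear equation with bounded measurable coefficients, and a De Giorgi–Nash–Moser / Krylov–Safonov argument upgrades $u$ to $C^{\alpha}$ in space-time, hence the coefficients $a^{ij}(\nabla u)$ are continuous; Schauder estimates then bootstrap to $C^\infty$, respecting the Neumann condition on the smooth boundary. This also extends the classical solution globally in time, so $u\in C^\infty([0,+\infty)\times\overline\Omega)$.

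The main obstacle I anticipate is the boundary analysis in the maximum principle for $v=|\nabla u|^2$: the homogeneous Neumann condition $\partial_{\nu^\Omega}u=0$ on $\partial\Omega$ does not directly give a sign for $\partial_{\nu^\Omega}v$ unless one brings in the second fundamental form of $\partial\Omega$ (differentiating the boundary condition tangentially produces curvature terms). Handling this cleanly — either by assuming/using convexity of $\Omega$, or by the standard trick of replacing $v$ with $v - C\,d(x)$ or $v\,e^{-\lambda d(x)}$ for a distance-like function $d$ and choosing constants to absorb the bad boundary term — is the delicate step; the interior computations and the final regularity bootstrap are routine given the gradient bound.
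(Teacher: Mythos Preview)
Your proposal is correct and follows essentially the same route as the paper: derive parabolic inequalities for $|\nabla u|^2/2$ and $u_t^2/2$ by differentiating the equation, observe that the Hessian-squared terms carry a favorable sign so the maximum principle applies, and then bootstrap via parabolic regularity once $\nabla u$ is bounded. The paper writes out the interior computations in explicit detail but is equally terse about the Neumann boundary step you flag as the delicate point, simply invoking the maximum principle and deferring to the cited works of Gerhardt and Ecker.
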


\begin{proof}
Both estimates follow by a direct computation and by the maximum principle.
By \cite{Lu:95} we have that there exists $\tau >0$ such that 
$u \in \mathcal C^{(2+\alpha)/2,2+\alpha}([0,\tau)\times \Omega)$ for all $\alpha \in (0,1)$,
and therefore, by parabolic regularity, we have 
$u \in \mathcal C^\infty([0,\tau)\times \overline \Omega)$. 
 
Let us first show \eqref{eqnau}, arguing
at points in $(0,\tau) \times \Omega$.
 Differentiating \eqref{evol_curvatura0c}, we get
\begin{align*}
\frac{\partial}{\partial t} \left(\frac{\vnau^2}{2}\right) &= \nau \cdot \nau_t = \nau \cdot \nabla 
\left( \div\left( \frac{\nau}{\sqrt{1+\vnau^2}} \right) \right) 
\\
&=  \nau \cdot \nabla \left( \frac{\Delta u}{\sqrt{1+\vnau^2}} 
- \frac{\nabla \left(\frac{\vnau^2}{2}\right) \cdot \nabla u}{(1+\vnau^2)^{3/2}} \right) .
\end{align*}
Letting $\partial_i=\frac{\partial}{\partial x_i}$, 
$\partial_{ij}=\frac{\partial}{\partial x_i} \frac{\partial}{\partial x_j}$, and denoting by $\nabla^2 u$ the Hessian
of $u$, 
we compute
\begin{align*}
\nau \cdot \nabla \left( \frac{\Delta u}{\sqrt{1+\vnau^2}} \right)
=&\, \displaystyle \frac{\sum_{i=1}^N \partial_i u\, \Delta (\partial_i u)}{\sqrt{1+\vnau^2}} 
- \frac{\Delta u \left(\nau\cdot\nabla \left(\frac{\vnau^2}{2}\right)\right)}{(1+\vnau^2)^{3/2}}
\\
=&\, \frac{\Delta\left( \frac{|\nau|^2}{2}\right)-|\nabla^2 u|^2}{\sqrt{1+\vnau^2}} 
- \frac{\Delta u \left(\nau\cdot\nabla \left(\frac{\vnau^2}{2}\right)\right)}{(1+\vnau^2)^{3/2}},
\end{align*}
where $\vert \nabla^2 u\vert^2 := \sum_{i,j=1}^N (\partial_i \partial_j u)^2$. Also
\begin{align*}
\nau \cdot \nabla \left( \frac{\nabla \left(\frac{\vnau^2}{2}\right) \cdot \nabla u}{(1+\vnau^2)^{3/2}} \right) 
=&\, \frac{\scal{\frac{\nau}{\sqrt{1+|\nau|^2}}}{\nabla^2\left(\frac{|\nau|^2}{2} \right)\frac{\nau}{\sqrt{1+|\nau|^2}}}}
{\sqrt{1+|\nau|^2}}
\\ 
&+ \frac{\left|\nabla \left(\frac{\vnau^2}{2}\right)\right|^2}{(1+\vnau^2)^{3/2}} 
- 3\, \frac{
\left(\nabla \left(\frac{\vnau^2}{2}\right) \cdot \nabla u\right)^2}{(1+\vnau^2)^{5/2}}. 
\end{align*}
Summing up, we get
\begin{align}\nonumber
 \frac{\partial}{\partial t} \frac{\vnau^2}{2} \le &\, 
 \frac{\Delta \left(\frac{|\nau|^2}{2}\right)-\scal{\frac{\nau}{\sqrt{1+|\nau|^2}}}{\nabla^2\left(\frac{|\nau|^2}{2} \right)\frac{\nau}{\sqrt{1+|\nau|^2}}}}{\sqrt{1+|\nau|^2}} 
 \\ \label{eppo}
 & - \frac{\left \vert \nabla \left( \frac{\vnau^2}{2} \right) \right \vert^2}{(1+\vnau^2)^{3/2}} 
 + 3 \frac{ \left(\nabla\left(\frac{\vnau^2}{2}\right)\cdot \nabla u\right)^2}{(1+\vnau^2)^{5/2}}
\\ \nonumber & - \frac{\Delta u \left(\nau\cdot\nabla \left(\frac{\vnau^2}{2}\right)\right)}{(1+\vnau^2)^{3/2}}\,.
\end{align}
The estimate \eqref{eqnau} then follows from the maximum principle applied to \eqref{eppo}.
By standard arguments \cite{Lu:95}, it now  follows that 
$u \in \mathcal C^\infty([0,+\infty) \times \overline \Omega)$, and 
\eqref{eqnau} holds for all $t>0$.
\medskip

Let us now show \eqref{equt}. From \eqref{evol_curvatura0c}, we get
\begin{align}\label{eqzero} 
\frac{\partial}{\partial t} \left(\frac{u_t^2}{2}\right) =&\, u_t  \left( \div \left( \frac{\nau}{\sqrt{1+\vnau^2}} \right) \right)_t  
\\ \nonumber =&\, u_t \div\left( \frac{\nau_t}{\sqrt{1+\vnau^2}} \right)- u_t \div\left( \frac{(\nau_t \cdot \nau) \nau}{(1+\vnau^2)^{3/2}} \right). 
\end{align}
We have
\begin{align}\label{equno}
 u_t \div\left(\frac{\nabla u_t}{\sqrt{1+\vnau^2}}\right) &= u_t \sum_i \partial_i \left( \frac{\partial_i u_{t}}{\sqrt{1+\vnau^2}} \right) 
\\\nonumber &=  \frac{u_t \,\Delta  u_{t}}{\sqrt{1+\vnau^2}}-\sum_i \frac{ u_t \partial_i u_{t} \nabla u \cdot \nabla \partial_iu }{(1+\vnau^2)^{3/2}} 
\\ \nonumber
&= \frac{u_t \Delta u_t}{\sqrt{1+\vert \nau\vert^2}} - \frac{\nabla\left( \frac{u_t^2}{2} \right) \cdot \nabla \left(\frac{\vnau^2}{2} \right)}{(1+\vnau^2)^{3/2}} =: {\rm I} + {\rm II},
\end{align}
and
\begin{align}\nonumber
u_t \div\left( \frac{(\nau_t \cdot \nau)\nau}{(1+\vnau^2)^{3/2}} \right) 
=&  \,\Delta u \frac{\nabla u\cdot u_t \nabla u_t }{(1+\vnau^2)^{3/2}} 
+  \frac{u_t  \nau \cdot \nabla^2 u_t \nau}{(1+\vnau^2)^{3/2}} 
\\\nonumber
& + \frac{ \nabla^2 u \nau \cdot u_t \nabla u_t}{(1+\vnau^2)^{3/2}} 
\\\label{eqdue}
&- 3 \sum_i \partial_i u\frac{u_t (\nabla u_t\cdot \nabla u) (\nabla \partial_i u\cdot\nabla u)}{(1+\vnau^2)^{5/2}} 
\\\nonumber
=& \,\Delta u \frac{\nabla u \cdot\nabla (\frac{u_t^2}{2})}{(1+\vnau^2)^{3/2}} 
+ \frac{u_t \nau \cdot \nabla^2 u_t \nau}{(1+\vnau^2)^{3/2}}
\\\nonumber 
& + \frac{\nabla (\frac{\vnau^2}{2})\cdot\nabla (\frac{u_t^2}{2})}{(1+\vnau^2)^{3/2}} 
\\\nonumber
&-3\frac{ \left(\nabla (\frac{u_t^2}{2})\cdot\nabla u\right) \left(\nabla (\frac{\vnau^2}{2})\cdot\nau\right)}{(1+\vnau^2)^{5/2}} 
\\\nonumber
=:&\ {\rm III}
+ {\rm IV} + {\rm V} + {\rm VI}, 
\end{align}
with ${\rm V} = -{\rm II}$, hence 
\begin{equation}\label{sost}
\frac{\partial}{\partial t}\left(\frac{u_t^2}{2}\right) = 
{\rm I} - {\rm III} - {\rm IV} - 2 {\rm V} - {\rm VI}.
\end{equation}
Now, we write $u_t \Delta u = \Delta(u_t^2/2) - \vert \nabla u_t\vert^2$,  and 
$$u_t \grad u \cdot \grad^2 u_t \grad u = \grad u\cdot \grad^2 (u_t^2/2)\grad u
- (\grad u \cdot \grad u_t)^2,$$ 
whence
\begin{align*}
{\rm I} - {\rm IV} =&\ 
\frac{\Delta \left(\frac{u_t^2}{2}\right)-\scal{\frac{\nau}{\sqrt{1+|\nau|^2}}}{\nabla^2\left(\frac{u_t^2}{2} \right)\frac{\nau}{\sqrt{1+|\nau|^2}}}}{\sqrt{1+|\nau|^2}}
\\
&-\frac{|\nau_t|^2-\left(\frac{\nau}{\sqrt{1+|\nau|^2}}\cdot\nau_t\right)^2}{\sqrt{1+|\nau|^2}}.
\end{align*}
Substituting into \eqref{sost} we then get
\begin{align*}
\frac{\partial}{\partial t} \left(\frac{u_t^2}{2}\right) 
=& 
\ \frac{  \Delta \left( \frac{u_t^2}{2} \right) 
-
           \frac{ \nau }{ \sqrt{1+|\nau|^2} }  \cdot  \nabla^2\left(\frac{u_t^2}{2} \right) \frac{\nau}{\sqrt{1+|\nau|^2}}
        }{
           \sqrt{1+|\nau|^2}
          }   
\\  \nonumber 
&
   - \frac{|\nau_t|^2-\left(\frac{\nau}{\sqrt{1+|\nau|^2}}\cdot\nau_t\right)^2}{\sqrt{1+|\nau|^2}} 
- 
\Delta u \frac{\nabla u \cdot\nabla (\frac{u_t^2}{2})}{(1+\vnau^2)^{3/2}}
\\ 
&
+3 \frac{\left(\nabla(\frac{u_t^2}{2}) \cdot \nabla u\right) 
\left(\nabla(\frac{\vnau^2}{2}) \cdot \nau\right)}{(1+\vnau^2)^{5/2}} 
- 
2\frac{\nabla\left( \frac{u_t^2}{2} \right) \cdot \nabla \left(\frac{\vnau^2}{2}\right)}{(1+\vnau^2)^{3/2}}.
\end{align*}
Observing that $\vert \grad u_t\vert^2 \leq \left(\frac{\grad u}{\sqrt{1 + \vert \grad u\vert^2}} \cdot \grad u_t \right)^2$, we get
\begin{align}\nonumber
\frac{\partial}{\partial t} \left(\frac{u_t^2}{2}\right) \leq 
&
\frac{  \Delta \left( \frac{u_t^2}{2} \right) 
-
           \frac{ \nau }{ \sqrt{1+|\nau|^2} }  \cdot  \nabla^2\left(\frac{u_t^2}{2} \right) \frac{\nau}{\sqrt{1+|\nau|^2}}
        }{
           \sqrt{1+|\nau|^2}
          }   
\\\label{eppa}
&
- 
\Delta u \frac{\nabla u \cdot\nabla (\frac{u_t^2}{2})}{(1+\vnau^2)^{3/2}}
\\\nonumber 
&
+3 \frac{\left(\nabla(\frac{u_t^2}{2}) \cdot \nabla u\right) 
\left(\nabla\left(\frac{\vnau^2}{2}\right) \cdot \nau\right)}{(1+\vnau^2)^{5/2}} 
\\\nonumber
&
- 
2\frac{\nabla\left( \frac{u_t^2}{2} \right) \cdot \nabla \left(\frac{\vnau^2}{2}\right)}{(1+\vnau^2)^{3/2}}.
\end{align}
The estimate \eqref{equt} follows as above from the maximum principle applied to \eqref{eppa}.
\end{proof}

\smallskip

\noindent{\em Proof of Theorem \ref{teo:main}.}

{\bf Case 1.} Assume that $N=1$. Note that $X(\Omega)=  H^1(\Omega)$.
Let $L\subset (0,+\infty)$ be a 
set of zero Lebesgue measure such that the partial
differential  equation in \eqref{eqstrong} and Theorem \ref{exunest} $(iii)$
hold for any $t\in J:= (0,+\infty)\setminus L$. 
We deduce that 
for all $\tau>0$ there exists a constant $C=C(\tau)>0$ such that 
$$
\|z(t,\cdot)\|_{H^1(\Om)}\le C, \qquad  t\ge\tau, ~ t\in J.
$$
In particular, 
by Sobolev embedding,
$z(t,\cdot)$ is $1/2$--H\"older continuous, uniformly for $t\ge \tau$, $t\in J$.
Therefore, for all $\eta\in (0,1)$ there exists $\e=\e(\eta)>0$ 
independent of $t \geq \tau$, $t \in J$,  such that, if 
\begin{equation}
 |z(t,x_0)|\ge \eta \qquad \text{for~some}~ (t,x_0) \in J\times\Omega ,
 \end{equation}
 then 
$$
|z(t,x)|\ge \frac\eta 2 \qquad  \text{for all }x\in (x_0-\e,x_0+\e)
\cap \Om. 
$$
Hence
\begin{equation}
\text{either}\quad u_x(t,x)\ge \frac{\eta}{\sqrt{4-\eta^2}} \quad
\text{or}\quad u_x(t,x)\le -\frac{\eta}{\sqrt{4-\eta^2}}\,,
\end{equation}
for almost  every  $x\in (x_0-\e,x_0+\e)\cap\Om$.
We then get
\begin{eqnarray}\nonumber
\Vert u(t)- \overline u_0\Vert^2_{L^2(\Omega)} &\geq& 
\min_{\alpha \in \R} \Vert u(t)- \alpha\Vert^2_{L^2(\Omega)}
\\
&\geq& 
\min_{\beta \in \R}  f(\beta),
\end{eqnarray}
where\footnote{
Indeed, if $c:= \frac{\eta}{4-\eta^2}$, 
for any $ \alpha\in\R$ and $ x\in (x_0-\eps,x_0+\eps) \cap \Omega$
we have
$|u(t,x)-\alpha| = \vert u(t,x) - \alpha + \int_{x_0}^x u'(t,\xi)~d\xi \vert 
 \ge |u(t,x_0)-\alpha+c(x-x_0)|=|c x-\beta|$,
where $\beta:=\alpha-u(t,x_0)+c x_0$. 
Hence
$\int_\Om (u(t,x)-\alpha)^2~dx\ge \int_\Om (cx-\beta)^2~dx$, and therefore
$\min_{\alpha \in \R}\int_\Om (u(t,x)-\alpha)^2\ge \min_{\beta\in \R} \int_\Om (cx-\beta)^2~dx$.
}
$$
f(\beta):=  \int_0^\e 
\left( \frac{\eta}{\sqrt{4-\eta^2}}\,x -\beta\right)^2\,dx, \qquad \beta \in \R,
$$
and we use
that $|\Om\cap (x_0-\eps,x_0+\eps)|\ge \eps$ 
as soon as  $\eps \in (0, \vert\Om\vert]$.

One checks that 
$\min_{\beta \in \R}  f(\beta) = 
f
 \left(
  \frac{\eps \eta}{2 \sqrt{4-\eta^2}}
 \right)
=\frac{\eta^2}{4-\eta^2}\, \frac{\eps^3}{12}$. Hence
$$
\Vert u(t)- \overline u_0\Vert^2_{L^2(\Omega)}
\geq  \frac{\eta^2}{4-\eta^2}\, \frac{\eps^3}{12}\,.
$$
Since $u(t)\to \overline u_0$ in $L^2(\Om)$ as $t \to +\infty$ by Theorem \ref{exunest} $(ii)$, 
in order not to have a contradiction 
it follows that, 
given $\eta \in (0,1)$, 
there exists $T=T(\eta)$ such that
$\vert z(t,x)\vert<  \eta$ for all $(t,x)\in J\times\Om$, $t\ge T$. 
Therefore
$u(t,\cdot)$ is $(\eta/\sqrt{4-\eta^2})$-Lipschitz in $\Om$ for all $t>T$, $t \in J$, and hence 
for all $t > T$. 
Thus, by parabolic regularity theory, 
$$
u(t,\cdot)\in \mathcal C^\omega(\Om)\qquad \text{for all }t>T\,, 
$$
and 
$u(t)\to \overline u_0$ in $\mathcal C^\infty(\Om)$ as $t\to +\infty$.

\smallskip

{\bf Case 2.} Assume that $N>1$. Let  $u_0\in   L^\infty(\Om)\cap BV(\Om)$, and
suppose that ${\rm graph}(\inidat)$ is a hypersurface 
of class $\mathcal C^{1,1}$ meeting orthogonally $\partial\Om$. 
We divide the proof into four steps.

{\it Step 1.} There exists a sequence 
$(u^n_0) \subset {\mathcal C}^\infty(\Omega) \cap {\rm Lip}(\Om)$ 
converging to $u_0$ in $L^2(\Omega)$ and such that
\begin{equation}\label{stimatn}
\sup_{n \in \mathbb N}  
\left
\Vert 
{\rm div}\left(\frac{\nabla u^n_0}{\sqrt{1+|\nabla u^n_0|^2}}\right)
\right\Vert_{L^\infty(\Omega)}  < +\infty.
\end{equation}

Indeed, let $\Sigma(t)$ be the mean curvature evolution starting from 
${\rm graph}(\inidat) = \Sigma(0)$, with Neumann boundary conditions on 
$\partial\Om\times\R$ (see \cite{Stahl,Buckland}). Since ${\rm graph}(\inidat)$ is of class 
$\mathcal C^{1,1}$, there exists an evolution $t\mapsto \Sigma(t)$, with 
$t\in [0,\tau]$ for some $\tau>0$, such that 
$\Sigma(t)$ is of class $\mathcal C^\infty$ (actually analytic) and $\Sigma(t)$ is
of class $\mathcal C^{1,1}$\cite{Stahl}\footnote{The fact that $\Sigma(t)$ is of class
$\mathcal C^{1,1}$ uniformly in $[0,\tau]$ follows from the assumption on ${\rm graph}(u_0)$ and the estimates 
in \cite{Stahl} (see, {\it e.g.},   \cite[Ch. 13]{Bel} for related references and a precise argument).} uniformly in $[0,\tau]$.
 Moreover, 
letting $\nu(t)$ be the unit normal to $\Sigma(t)$ pointing upward,
so that $\nu_{n+1}(0)\ge 0$ on $\Sigma(0)$,
by the strong maximum principle we have $\nu_{n+1}(t)> 0$ on $\Sigma(t)$ 
for any $t\in (0,\tau]$. That is, $\Sigma(t)$ is the graph of a function 
$v(t)\in {\mathcal C}^\infty(\Omega) \cap {\rm Lip}(\Om)$.
We conclude by letting 
$$
u^n_0:=v(t_n),
$$
 where $(t_n)\subset (0,\tau)$ 
is a sequence converging to $0$ as $n \to +\infty$, so that ${\rm graph}(u^n_0)$ is of class 
$\mathcal C^{1,1}$ uniformly in $n\in \mathbb N$, which implies \eqref{stimatn}.

{\it Step 2.} The function $u$ satisfies 
\begin{equation}\label{stut}
\sup_{t\in (0,+\infty)} \Vert u_t (t)\Vert_{L^\infty(\Omega)} < +\infty.
\end{equation}
Let $u_n(t)$, $t\in [0,+\infty)$, be the solution 
of the first two equations of \eqref{evol_curvatura0c},  with initial condition  $u_n(0) = u^n_0$,
where $u^n_0$ is as in {\it step 1}. {}From Lemma \ref{lem:crucial} we have $u_n
\in \mathcal C^\infty([0,+\infty) \times \overline \Omega)$; 
{}from \eqref{stimatn} and \eqref{equt}, 
it follows that 
\begin{equation}\label{stimatons}
\sup_{n \in \mathbb N} 
\Vert \partial_t u_n(t)\Vert_{L^\infty(\Omega)} \le C
\qquad \forall t\in (0,+\infty),
\end{equation}
where $C>0$ is a constant bounding the left hand side of  \eqref{stimatn}.
Notice that \eqref{stimatons} is equivalent to say that the functions $u_n(\cdot,x)$
are $C$-Lipschitz on $(0,+\infty)$, uniformly in $n\in\mathbb N$ and $x\in\Om$.
Recalling that, by Proposition \ref{pro:approx}, $\lim_{n \to +\infty}u_n = u$ in $L^2((0,T)\times\Om)$ for all $T>0$, we can
extract a (not relabelled) subsequence so that 
$\lim_{n \to +\infty}u_n = u$  almost everywhere in $(0,+\infty)\times\Om$.
Passing to the limit, as $n \to +\infty$,  in the inequality
$|u_n(t,x)-u_n(s,x)|\le C|t-s|$  for almost every $(t,s,x) \in (0,+\infty)\times (0,+\infty)\times \Omega$, 
we get that $u(\cdot,x)$ is also $C$-Lipschitz in $(0,+\infty)$,
uniformly in $x\in\Om$, which gives  \eqref{stut}.

{\it Step 3.} We have that\footnote{$\alpha$ in general cannot be taken equal to one, see \cite{Amb97}.} 
\begin{equation}\label{strip}
{\rm graph}(u(t)) \text{ is }\mathcal C^{1,\alpha} 
\text{ for any }\alpha\in (0,1),
\text{ uniformly in }t\in (0,+\infty).
\end{equation} 

To prove assertion \eqref{strip} it is enough to 
closely follow  \cite[Proposition 4.4]{CN} and use
 \eqref{stut}:
we repeat here the argument for completeness. 
Suppose  by contradiction that \eqref{strip} does not hold. Then, for any $n \in \mathbb N$,  we can find 
$(t_n, x_n,y_n)\in (0,+\infty) \times \Om\times \R$  such that
$(x_n,y_n)\in {\rm graph}(u(t_n))$
and, for all $\rho>0$, the hypersurfaces ${\rm graph}(u(t_n))\cap B_\rho(x_n,y_n)$\footnote{Here $B_\rho(x,y)$ 
denotes the ball of radius $\rho$ in $\R^{N+1}$
centered at $(x,y)\in \overline\Om\times\R$.} 
are not uniformly $\mathcal{C}^{1,\alpha}$ with respect to $n$.  
Letting $\widetilde u_n(x) := u(t_n, x)-u(t_n, x_n)$, from \eqref{evol_curvatura0c} and \eqref{stut} we have that 
\begin{equation}\label{curvtilde}  
-\text{div}\left(\frac{ \nabla\widetilde u_n(x) }{\sqrt{1+|\nabla \widetilde u_n(x)|^2}}\right)= \kappa_n(x),    \qquad
x \in \Omega.
\end{equation} 
with 
\begin{equation}\label{eq:unifh}
\sup_{n \in \mathbb N}\|\kappa_n\|_{L^\infty(\Omega)}< +\infty.
\end{equation}
In particular, $\widetilde u_n$ is a minimizer of
the prescribed curvature functional
\[
 v\in BV(\Om) \mapsto \int_\Om \left(\sqrt{1+|\nabla v|^2} - \kappa_n v\right)\,dx + |D^s v|(\Om).
\]

{}From \eqref{eq:unifh} and the compactness theorem for quasi minimizers of the perimeter \cite{Amb97}, \cite{Gi:84}, 
the hypersurfaces
${\rm graph}(\widetilde u_n)$
converge in  $L^1(\Omega \times \R)$, and 
 up to a (not relabelled) subsequence, to a limit hypersurface $\Gamma_\infty\subset\overline\Om\times \R$ 
of class $\mathcal C^{1,\alpha}$, for all $\alpha\in (0,1)$.
Possibly passing to a further subsequence, 
we can also assume that $\lim_{n \to +\infty}x_n = x_\infty$, for some $x_\infty \in \overline\Om$. 
Observe that $\widetilde u_n(x_n) = 0$, and so $(x_\infty, 0) \in \Gamma_\infty$.

By \cite[Theorem 1]{Tamanini} there exists $\rho>0$ such that both  ${\rm graph}(\widetilde u_n)\cap B_\rho(x_\infty,0)$ and 
$\Gamma_{\infty}\cap B_\rho(x_\infty,0)$ can be written as graphs of functions of $N$ variables,
in the normal direction to $\Gamma_\infty$ at $(x_\infty,0)$.
Therefore, by regularity of minimizers of the prescribed curvature functional \cite{m},
the hypersurfaces ${\rm graph}(\widetilde u_n)\cap B_\rho(x_\infty,0)$ are uniformly (with respect to $n\in \mathbb N$) of class $\mathcal C^{1,\alpha}$ for all $\alpha\in (0,1)$,
thus leading to a contradiction.  

{\it Step 4.} From \eqref{strip} if follows that the vector field $z(t,\cdot)$ 
in \eqref{eqstrong} is $\alpha$-H\"older continuous in $\Om$, uniformly with 
respect to $t\in (0,+\infty)$. We can now proceed as in case 1, with only minor changes. Indeed, letting $L$ and $J\subset:=  (0,+\infty)\setminus L$ 
be as in case 1, from the 
H\"older continuity of $z$ we get that,
for all $\eta\in (0,1)$, there exists $\e=\e(\eta)>0$ independent of $t \in J$,  such that,
if for some $\nu \in \mathbb S^{N-1}$, 
\begin{equation}
 z(t,x_0)\cdot\nu\ge \eta \qquad \text{for~some}~ (t,x_0) \in J\times\Omega, 
 \end{equation}
 then 
$$
z(t,x)\cdot\nu\ge \frac\eta 2 \qquad  \text{for all }x\in B_\e(x_0)\cap \Om. 
$$
It then follows 
\begin{equation}
\nabla u(t,x)\cdot\nu\ge \frac{\eta}{\sqrt{4-\eta^2}} 
\qquad \text{for almost  every }x\in B_\e(x_0)\cap \Om,
\end{equation}
which implies, as in case 1, 
\begin{eqnarray}\nonumber
\Vert u(t)- \overline u_0\Vert^2_{L^2(\Omega)} &\geq& 
\min_{\alpha \in \R} \Vert u(t)- \alpha\Vert^2_{L^2(\Omega)}
\\
&\geq& 
\min_{\beta \in \R} 
g(\beta),
\end{eqnarray}
where 
$$
g(\beta) := \int_{B_\e(x_0)\cap \Om} 
\left( \frac{\eta}{\sqrt{4-\eta^2}}\,x\cdot\nu -\beta\right)^2\,dx, \qquad
\beta \in \R.
$$
One checks that 
$$
\begin{aligned}
\min_{\beta \in \R} 
g(\beta) 
= &\ g\left(
\frac{\eta \int_{B_\eps(x_0)\cap \Omega} x\cdot \nu~dx}{\sqrt{4-\eta^2} \vert B_\eps(x_0) \cap \Omega\vert}
         \right)
\\
= & \ \frac{\eta^2}{4-\eta^2} 
 \left( 
         \int_{ B_\eps(x_0)\cap \Omega } (x\cdot \nu)^2~dx
         -
            \left(
            \int_{ B_\eps(x_0)\cap \Omega} x\cdot \nu~dx
            \right)^2
\right) 
\\ 
\geq & \ C\,\frac{\eta^2}{4-\eta^2}\,\e^{N+2},
\end{aligned}
$$
for all $\eps \in (0,\eps_0(\Om))$, where $\eps_0(\Omega)>0$ depends only on $\Omega$, and  the constant $C>0$ depends only on the dimension $N$. 
Hence
$$
\Vert u(t)- \overline u_0\Vert^2_{L^2(\Omega)} \geq 
C\,\frac{\eta^2}{4-\eta^2}\,\e^{N+2}.
$$

Since $\Vert u(t)- \overline u_0\Vert^2_{L^2(\Omega)}\to 0$ as $t \to +\infty$ by Theorem \ref{exunest}, 
it follows that, given $\eta \in (0,1)$, 
there exists $T=T(\eta)$ (in particular independent of $\nu$) such that 
$\vert z(t,x)\vert\leq   \eta$ for all $(t,x)\in J\times\Om$, $t\ge T$. 
Therefore
$u(t,\cdot)$ is $(\eta/\sqrt{4-\eta^2})$-Lipschitz in $\Om$ for all $t>T$, so that
by parabolic regularity theory
$u(t,\cdot)\in \mathcal C^\omega(\Om)$ for all $t > T$, 
and 
$u(t)\to \overline u_0$ in $\mathcal C^\infty(\Om)$ as $t\to +\infty$.
\qed

\smallskip
We conclude the paper with an example showing that,
in contrast with  the one-dimensional case,
in higher dimensions there is no instantaneous regularization
of ${\rm graph}(u(t))$\footnote{In dimension one, we have proven that 
$z(t,\cdot)$ becomes instantaneously $1/2$-H\"older continuous, and this implies
that the graph of $u(t,\cdot)$ becomes instantaneously 
of class $\mathcal C^{1,1/2}$.}.

\begin{example}\rm
Let $N \geq 3$, $\Omega = B_1$ be the unit ball of $\R^N$ centered at the origin, and 
$\sigma := \frac{1}{N-1}$. 
Let $u_0(x) :=  1/|x|$, $x \in \Omega \setminus \{0\}$; notice that  $u_0\in L^2(\Om)\cap  BV(\Om)$.
Let $u$  be the solution to
\eqref{evol_curvatura0c} given by Theorem \ref{exunest}, so that
 $u(t,\cdot)\in L^2(\Om)\cap  BV(\Om)$ for all $t>0$.
Let us check that the function
$$
v(t,x):= \frac{a(t)}{|x|}, \qquad t>0, ~x \in \Omega \setminus \{0\}
$$
where 
$$
a(t):= \max\big(1-(N-1)\,t,0\big), \quad t >0, 
$$ 
is a subsolution
of \eqref{eqstrong}. For all $(t,x)\in \left(0, \sigma\right)\times(\Om \setminus \{0\})$, a direct computation gives
$\vert \grad v(t,x)\vert^2 = \frac{a(t)^2}{\vert x\vert^4}$, and 
$$
\begin{aligned}
& {\rm div}\left( \frac{\nabla v(t,x)}{\sqrt{1+|\nabla v(t,x)|^2}}\right) 
\\
= & 
-a 
\left(
\frac{N-3}{\vert x\vert} \frac{1}{\sqrt{a(t)^2 + \vert x\vert^4}}
+
 \frac{2}{\vert x\vert} \frac{a(t)^2}{(a(t)^2 + \vert x\vert^4 )^{3/2}}
\right) 
\\
& \geq 
-a 
\frac{N-1}{\vert x\vert} \frac{1}{\sqrt{a(t)^2 + \vert x\vert^4}}.
\end{aligned}
$$
Hence
\begin{eqnarray*}
v_t(t,x) \ =\ - \frac{(N-1)}{|x|} 
&\le& - \frac{(N-1)}{|x|} \,\frac{a(t)}{\sqrt{a(t)^2+|x|^4}}
\\
&\le& {\rm div}\left( \frac{\nabla v(t,x)}{\sqrt{1+|\nabla v(t,x)|^2}}\right)
\end{eqnarray*}
and therefore $v$ is a subsolution
of \eqref{eqstrong}.
By comparison principle (see \cite{ACMbook}) it follows 
that $u \ge v $ almost everywhere in  $(0,+\infty)\times \Om$. As a consequence,
${\rm graph}(u(t))$ is not of class $\mathcal C^{1}(\Omega)$ for $t\in [0,\sigma)$.
\end{example}


\end{document}